\documentclass[11pt]{article}

\usepackage[a4paper,margin=1in]{geometry}
\usepackage{amsmath,amssymb,amsthm}
\usepackage{graphicx}
\usepackage{hyperref}

\hypersetup{
 colorlinks=true,
 linkcolor=blue,
 citecolor=blue,
 urlcolor=blue,
 pdftitle={Higher-Order Hankel Obstructions to Free Infinite Divisibility for Beta Distributions},
 pdfauthor={Diwen Yu},
 pdfkeywords={free infinite divisibility, beta distribution, free cumulants, Hankel matrix}
}

\newtheorem{theorem}{Theorem}[section]
\newtheorem{proposition}[theorem]{Proposition}
\newtheorem{corollary}[theorem]{Corollary}
\newtheorem{lemma}[theorem]{Lemma}
\theoremstyle{definition}

\newcommand{\FID}{\mathrm{FID}}
\newcommand{\R}{\mathbb{R}}
\newcommand{\N}{\mathbb{N}}
\newcommand{\dd}{\,\mathrm{d}}

\title{Higher-Order Hankel Obstructions to Free Infinite Divisibility\
for Beta Distributions}
\author{Diwen Yu\\
\small Tsinghua University\\
\small \texttt{ydw23@mails.tsinghua.edu.cn
}}
\date{\today}

\begin{document}
\maketitle

\begin{abstract}
We study free infinite divisibility in the two-parameter family of beta
distributions $\{\beta_{p,q}:p,q>0\}$.  Conditional positive definiteness of
free cumulants yields a hierarchy of necessary Hankel conditions.  We
factor the first nontrivial determinant and obtain the explicit necessary
inequality
\[
 2s^3(s+1)+pq\bigl(s^3-7s^2-16s-12\bigr)\geq0,
 \qquad s=p+q.
\]
Its strict reverse defines an open two-dimensional non-freely-infinitely-divisible region not contained in the previously known exclusions.  As a
boundary consequence, we complete the classification of one boundary family:
$\beta_{1/2,q}$ is freely infinitely divisible if and only if
$q\geq3/2$.  The $3\times3$ determinant is also obtained explicitly in the
symmetric variables $s=p+q$ and $u=pq/s^2$.  Finally, exact-rational
$LDL^{\mathsf T}$ certificates show that each leading Hankel test from
$3\times3$ through $12\times12$ strictly enlarges the exclusion supplied by
all preceding leading tests.  In particular, the $4\times4$ test already
detects an open set with $p+q>3$, beyond the range accessible to the
$2\times2$ determinant.  The results are finite-order obstructions rather
than a complete classification; two limiting arguments explain why no fixed member $H_N$ of the leading Hankel hierarchy can provide a uniform obstruction up to the small-parameter boundary.
\end{abstract}

\noindent\textbf{Keywords.}
Free infinite divisibility; beta distribution; free cumulants; Hankel
matrix; conditional positive definiteness.

\noindent\textbf{2020 Mathematics Subject Classification.}
46L54; 60E07.

\section{Introduction}

For $p,q>0$, let $\beta_{p,q}$ be the beta distribution on $[0,1]$,
\begin{equation}\label{eq:density}
 \beta_{p,q}(\dd x)=\frac{x^{p-1}(1-x)^{q-1}}{B(p,q)}
 \mathbf 1_{(0,1)}(x)\dd x.
\end{equation}
A probability measure $\mu$ on $\R$ is freely infinitely divisible (FID) if,
for every $n\in\N$, it is an $n$-fold free additive convolution power of
some probability measure.  The analytic foundations are due to Bercovici
and Voiculescu~\cite{BercoviciVoiculescu1993}; we use the equivalent
conditional-positive-definiteness criterion for free cumulants of compactly
supported measures~\cite[Chapter~13]{NicaSpeicher2006}.

The FID classification of the beta family is only partly known.  Hasebe
proved~\cite{Hasebe2014} that $\beta_{p,q}$ is FID in the regions
\begin{equation}\label{eq:known-fid}
 p,q\geq\frac32,\qquad
 0<p\leq\frac12\ \text{ and }\ p+q\geq2,
 \qquad
 0<q\leq\frac12\ \text{ and }\ p+q\geq2,
\end{equation}
and is not FID if $0<p,q\leq1$, or if either parameter belongs to
\begin{equation}\label{eq:I}
 I=\bigcup_{n\geq1}\left(\frac{2n-1}{2n},\frac{2n}{2n+1}\right)
 \cup\bigcup_{n\geq1}
 \left(\frac{2n+2}{2n+1},\frac{2n+1}{2n}\right).
\end{equation}
There are also exact results on special curves.  The diagonal classification
is $\beta_{t,t}\in\FID$ exactly for $t\geq3/2$
\cite{ArizmendiPerezAbreu2010,Hasebe2014}, while the family
$\beta_{1-1/r,1+1/r}$ on $p+q=2$ is FID exactly for the nondegenerate beta laws, precisely for $1<r\leq2$; the case $r=1$ is interpreted as a
degenerate limiting endpoint~\cite{ArizmendiHasebe2013}.

Hankel positivity is a standard necessary condition for free infinite
divisibility and has previously been used for related power distributions
\cite{ArizmendiPerezAbreu2010,Hasebe2016,MorishitaUeda2018}. Hasebe already used higher-order Hankel determinants directly for beta distributions, obtaining several finite-parameter exclusions and formulating the conjecture that $\beta_{p,q}$ is not freely infinitely divisible whenever one parameter lies in $(1/2,3/2)$~\cite{Hasebe2014}. The present work develops this approach systematically over the full two-parameter plane: we obtain explicit low-order factorizations and exact certificates showing successive strict improvement of the leading Hankel tests. Our
contribution is not a new general criterion, but its systematic application
to the full beta parameter plane.  The main results are as follows.
\begin{enumerate}
 \item We factor the first nontrivial Hankel determinant and describe its
 negative set exactly.  The set contains an open two-dimensional region not
 covered by the known exclusions.
 \item Combining this obstruction with~\eqref{eq:known-fid} and the known
 negative results gives the complete classification
 $\beta_{1/2,q}\in\FID$ if and only if $q\geq3/2$.
 \item We give the next determinant explicitly and prove, by
 exact-rational certificates, that every leading Hankel size from
 $3\times3$ through $12\times12$ gives a strict new exclusion.
\end{enumerate}
To the best of our knowledge, the explicit full-family factorization of $D_1$, the resulting classification of the boundary family $p=1/2$, the symmetric-variable formula for $D_2$, and the exact successive separation of the leading Hankel tests through size 12 have not appeared previously.

The paper is organized as follows.  Section~\ref{sec:hankel} recalls the
Hankel criterion.  Sections~\ref{sec:first} and~\ref{sec:consequences}
derive and apply the first obstruction.  Section~\ref{sec:higher} treats
higher orders.  The appendices record the explicit $3\times3$ formula,
two rigorous limiting mechanisms, and details of the exact computation.

\section{Hankel criteria for free infinite divisibility}
\label{sec:hankel}

Let $m_n(\mu)=\int x^n\mu(\dd x)$ and let $\kappa_n(\mu)$ denote the free
cumulants, determined by
\begin{equation}\label{eq:mc}
 m_n(\mu)=\sum_{\pi\in NC(n)}\prod_{V\in\pi}\kappa_{|V|}(\mu).
\end{equation}
For $N\geq0$, define
\begin{equation}\label{eq:HN}
 H_N(\mu)=\bigl[\kappa_{i+j+2}(\mu)\bigr]_{i,j=0}^{N},
 \qquad D_N(\mu)=\det H_N(\mu),\qquad D_{-1}(\mu)=1.
\end{equation}

\begin{lemma}[Hankel necessary condition]\label{lem:hankel}
If a compactly supported probability measure $\mu$ is FID, then
$H_N(\mu)$ is positive semidefinite for every $N\geq0$.  In particular,
\begin{equation}\label{eq:D1necessary}
 D_1(\mu)=\kappa_2(\mu)\kappa_4(\mu)-\kappa_3(\mu)^2\geq0.
\end{equation}
\end{lemma}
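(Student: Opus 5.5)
The plan is to derive the lemma from the free Lévy--Khintchine representation of compactly supported FID measures, in the form of the conditional-positive-definiteness criterion of \cite[Chapter~13]{NicaSpeicher2006}. That criterion says the following. A compactly supported $\mu$ is FID if and only if its free cumulant sequence $(\kappa_n(\mu))_{n\ge1}$ is conditionally positive definite. Equivalently, the shifted sequence $(\kappa_{n+2}(\mu))_{n\ge0}$ is the moment sequence of a finite positive Borel measure $\rho$ on $\R$:
\begin{equation*}
 \kappa_{n+2}(\mu)=\int_{\R} x^{n}\,\rho(\dd x),\qquad n\ge0.
\end{equation*}
Here $\rho$ is the free Lévy measure together with its Gaussian part, so $\rho(\R)=\kappa_2(\mu)$. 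Since $\mu$ has compact support, $\rho$ has compact support too; the book's statement already gives this, or one can read it off from the analytic continuation of the $R$-transform near $0$. Compact support of $\rho$ guarantees that all the moments above are finite.

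With this representation the positivity is immediate. Take $N\ge0$ and a vector $c=(c_0,\dots,c_N)\in\R^{N+1}$. Then
\begin{equation*}
 c^{\mathsf T}H_N(\mu)c=\sum_{i,j=0}^{N}c_ic_j\int x^{i+j}\,\rho(\dd x)
 =\int\Bigl(\sum_{i=0}^{N}c_ix^{i}\Bigr)^{2}\rho(\dd x)\ \ge0,
\end{equation*}
so $H_N(\mu)$ is positive semidefinite. A positive semidefinite symmetric matrix has nonnegative principal minors, in particular nonnegative determinant. Applying this with $N=1$ gives $D_1(\mu)=\kappa_2\kappa_4-\kappa_3^2\ge0$, which is the inequality \eqref{eq:D1necessary}.

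The only step that needs care is matching conventions. The cited criterion is phrased as conditional positive definiteness of the sequence $(\kappa_n)$, meaning that $\sum_{i,j\ge1}c_i\bar c_j\kappa_{i+j}\ge0$ for finitely supported $(c_i)_{i\ge1}$ (with $\kappa_0$ and $\kappa_1$ excluded). The indices must be aligned with the $(\kappa_{i+j+2})_{i,j\ge0}$ used in \eqref{eq:HN}. Shifting $i\mapsto i+1$ and $j\mapsto j+1$ turns the criterion exactly into positive semidefiniteness of $H_N(\mu)$ for all $N$, so one can even bypass the measure $\rho$ and quote that form directly. Either route settles the lemma; for the lemma only the necessity direction is needed.
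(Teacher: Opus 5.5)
Your proposal is correct and follows the same route as the paper. Both invoke the Nica--Speicher conditional-positive-definiteness criterion for free cumulants, reindex $[\kappa_{i+j}]_{i,j\geq1}$ as $H_N(\mu)$, and read off $D_1(\mu)\geq0$ from positive semidefiniteness; your detour through the representing measure $\rho$ (via Hamburger's theorem) is an equivalent repackaging of that step rather than a different argument.
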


\begin{proof}
For a compactly supported FID measure, the free-cumulant sequence is
conditionally positive definite.  Equivalently, the shifted matrix
$[\kappa_{i+j}]_{i,j\geq1}$ is positive semidefinite on every finite index
set.  Taking the first $N+1$ indices gives the assertion.
\end{proof}

If $D_0,\ldots,D_{N-1}$ are positive, the unpivoted
$LDL^{\mathsf T}$ decomposition of $H_N$ exists through its last pivot and
has diagonal entries
\begin{equation}\label{eq:pivot}
 \ell_j=\frac{D_j}{D_{j-1}},\qquad 0\leq j\leq N.
\end{equation}
Thus positive preceding pivots followed by $\ell_N<0$ provide an exact
certificate that $H_{N-1}$ is positive definite but $H_N$ is not positive
semidefinite.

\section{The first Hankel obstruction}
\label{sec:first}

The moments of~\eqref{eq:density} are
\begin{equation}\label{eq:moments}
 m_n(p,q)=\frac{(p)_n}{(p+q)_n},\qquad n\geq0.
\end{equation}
The moment--cumulant formula gives
\begin{align}
 \kappa_1&=\frac{p}{p+q},\label{eq:k1}\\
 \kappa_2&=\frac{pq}{(p+q)^2(p+q+1)},\label{eq:k2}\\
 \kappa_3&=-\frac{2pq(p-q)}{(p+q)^3(p+q+1)(p+q+2)},\label{eq:k3}\\
 \kappa_4&=\frac{pq\,Q_4(p,q)}
 {(p+q)^4(p+q+1)^2(p+q+2)(p+q+3)},\label{eq:k4}
\end{align}
where
\begin{align*}
 Q_4(p,q)={}&p^3q+6p^3+2p^2q^2-7p^2q+6p^2+pq^3-7pq^2\\
 &{}-18pq+6q^3+6q^2.
\end{align*}

\begin{theorem}[Explicit first obstruction]\label{thm:first}
Let $p,q>0$, put $s=p+q$, and set
\begin{equation}\label{eq:Psi}
 \Psi(p,q)=2s^3(s+1)+pq\bigl(s^3-7s^2-16s-12\bigr).
\end{equation}
Then
\begin{equation}\label{eq:D1factor}
 D_1(\beta_{p,q})=
 \frac{p^2q^2\Psi(p,q)}
 {s^6(s+1)^3(s+2)^2(s+3)}.
\end{equation}
Consequently, $\Psi(p,q)<0$ implies that $\beta_{p,q}$ is not FID.
\end{theorem}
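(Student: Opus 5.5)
The plan is a direct computation of $D_1(\beta_{p,q})=\kappa_2\kappa_4-\kappa_3^2$ from the cumulant formulas \eqref{eq:k2}--\eqref{eq:k4}. I take those formulas as given; they follow from \eqref{eq:mc} and \eqref{eq:moments} by the standard inversion of the first four moments. The natural coordinates are the symmetric ones, $s=p+q$ and $r=pq$, since both $Q_4$ and $(p-q)^2$ are symmetric in $p,q$.

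\emph{Step 1 (common denominator).} From \eqref{eq:k2} and \eqref{eq:k4},
\[
\kappa_2\kappa_4=\frac{p^2q^2\,Q_4}{s^6(s+1)^3(s+2)(s+3)}.
\]
From \eqref{eq:k3},
\[
\kappa_3^2=\frac{4p^2q^2(p-q)^2}{s^6(s+1)^2(s+2)^2}.
\]
Over the common denominator $s^6(s+1)^3(s+2)^2(s+3)$ this gives
\[
D_1=\frac{p^2q^2\bigl[(s+2)Q_4-4(p-q)^2(s+1)(s+3)\bigr]}{s^6(s+1)^3(s+2)^2(s+3)}.
\]
So \eqref{eq:D1factor} reduces to the polynomial identity $(s+2)Q_4-4(p-q)^2(s+1)(s+3)=\Psi$.

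\emph{Step 2 (symmetric reduction).} I rewrite $Q_4$ by grouping its terms as
\[
Q_4=pq(p^2+q^2)+2p^2q^2-7pq(p+q)-18pq+6(p^3+q^3)+6(p^2+q^2).
\]
Substituting $p^2+q^2=s^2-2r$ and $p^3+q^3=s^3-3rs$ gives
\[
Q_4=6s^3+6s^2+r\,(s^2-25s-30).
\]
In particular, the $r^2$ terms cancel. I also use $(p-q)^2=s^2-4r$.

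\emph{Step 3 (expansion).} The three pieces are:
\begin{align*}
(s+2)Q_4&=6s^4+18s^3+12s^2+r\,(s^3-23s^2-80s-60),\\
-4(s^2-4r)(s^2+4s+3)&=-4s^4-16s^3-12s^2+r\,(16s^2+64s+48).
\end{align*}
Summing them yields
\[
2s^4+2s^3+r\,(s^3-7s^2-16s-12)=\Psi(p,q),
\]
which proves \eqref{eq:D1factor}. This bookkeeping is the only real obstacle. I would double-check it by evaluating both sides at a sample point, for instance $p=q=1$, where $D_1$ can also be computed directly from the uniform-law cumulants $\kappa_2=1/12$, $\kappa_3=0$, $\kappa_4=-1/120$.

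\emph{Step 4 (conclusion).} For $p,q>0$ the prefactor $p^2q^2/\bigl(s^6(s+1)^3(s+2)^2(s+3)\bigr)$ is strictly positive, so $\operatorname{sign}D_1(\beta_{p,q})=\operatorname{sign}\Psi(p,q)$. Since $\beta_{p,q}$ is compactly supported, Lemma~\ref{lem:hankel} requires $D_1\ge0$ whenever $\beta_{p,q}$ is FID. Hence $\Psi(p,q)<0$ forces $D_1<0$, and $\beta_{p,q}$ is not FID.
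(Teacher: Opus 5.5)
Your proof is correct and is the paper's argument with the omitted bookkeeping filled in: substitute \eqref{eq:k2}--\eqref{eq:k4}, clear to the common denominator $s^6(s+1)^3(s+2)^2(s+3)$, and reduce the numerator in $s$ and $r=pq$. Your expansion checks out, including $Q_4=6s^3+6s^2+r(s^2-25s-30)$. There is one slip, in the side check only: for the uniform law $\kappa_4=-1/720$, not $-1/120$. So $D_1(\beta_{1,1})=-1/8640$, which agrees with \eqref{eq:D1factor}, whereas your stated value would give a spurious mismatch ($-1/1440$).
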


\begin{proof}
Substitution of~\eqref{eq:k2}--\eqref{eq:k4} into~\eqref{eq:D1necessary}
and collection over a common denominator gives a symmetric polynomial in
$p$ and $q$.  Rewriting it in the elementary symmetric variables
$s=p+q$ and $r=pq$ gives precisely~\eqref{eq:D1factor}.  Every factor in
the denominator and the factor $p^2q^2$ are positive.  The final assertion
therefore follows from Lemma~\ref{lem:hankel}.
\end{proof}

\begin{proposition}[Geometry of the obstruction]\label{prop:geometry}
For $p,q>0$ and $s=p+q$,
\begin{equation}\label{eq:region}
 \Psi(p,q)<0
 \quad\Longleftrightarrow\quad
 0<s<3\quad\text{and}\quad
 pq>\frac{2s^3(s+1)}{12+16s+7s^2-s^3}.
\end{equation}
Hence the obstruction is symmetric about $p=q$ and is contained in the
triangle $p+q<3$.
\end{proposition}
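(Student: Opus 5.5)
The plan is to treat $\Psi$ as an affine function of $r=pq$ for fixed $s=p+q$:
\[
 \Psi=2s^3(s+1)-r\,g(s),\qquad g(s)=12+16s+7s^2-s^3 .
\]
The first term is strictly positive for $s>0$. So $\Psi<0$ forces $r\,g(s)>2s^3(s+1)>0$, and since $r>0$ this requires $g(s)>0$. Conversely, when $g(s)>0$, dividing by $g(s)$ turns $\Psi<0$ into $r>2s^3(s+1)/g(s)$. The proof therefore reduces to showing that, for $s>0$, $g(s)>0$ holds exactly when $s<3$. I would also need to check that the case $s\ge 3$ leaves no solutions.

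The key step is to factor $g$. Trying $s=3$ gives $12+48+63-27=96$, so $s=3$ is not a root of $g$ itself. This shows the sign of $g$ cannot be what produces the cutoff $s<3$. The actual root of $g$ lies between $8$ and $9$: $g(8)=12+128+448-512=76>0$ and $g(9)=12+144+567-729=-6<0$. So $g>0$ holds on a larger interval $(0,s_*)$ with $s_*\approx 8.9$. The restriction $s<3$ must instead come from the constraint $pq\le s^2/4$. The threshold $2s^3(s+1)/g(s)$ has to be compared with the maximal attainable value $s^2/4$. The condition "there exist $p,q>0$ with $p+q=s$ and $pq$ above the threshold" is
\[
 \frac{2s^3(s+1)}{g(s)}<\frac{s^2}{4}
 \iff 8s(s+1)<g(s)
 \iff s^3+s^2-8s-12<0
 \iff (s-3)(s+2)^2<0 .
\]
This holds iff $s<3$.

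Assembling the pieces: for $s\ge s_*$ we have $g(s)\le 0$ and hence $\Psi>0$. For $3\le s<s_*$ the threshold is at least $s^2/4\ge pq$, so again $\Psi\ge0$. For $0<s<3$ we have $g(s)>0$, and $\Psi<0$ is equivalent to $pq>2s^3(s+1)/g(s)$. In that range the denominator in \eqref{eq:region} is positive, so the right-hand condition of \eqref{eq:region} is meaningful. Conversely, that condition together with $s<3$ gives $g>0$ and hence $\Psi<0$, which establishes the equivalence. Symmetry about $p=q$ is immediate because $\Psi$ depends only on $s$ and $pq$. Containment in the triangle $p+q<3$ is the first clause of \eqref{eq:region}.

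The main obstacle is recognizing that the bound $s<3$ does not come from the sign of the cubic $g$. It comes from the AM–GM constraint $pq\le s^2/4$. I would state the equivalence in the form "$0<s<3$ and $pq>\dots$", making explicit that the first clause is exactly the nonemptiness condition, with $g>0$ following automatically on $(0,3)$.
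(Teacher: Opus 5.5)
Your proposal is correct and follows essentially the paper's proof: the paper also splits on the sign of $A(s)=-g(s)$ and uses $pq\le s^2/4$ to get $\Psi\ge\frac{s^2}{4}(s-3)(s+2)^2$, which is exactly your threshold comparison $8s(s+1)<g(s)\iff(s-3)(s+2)^2<0$. You do not need the location of the root $s_*$ (which you justify only by two evaluations): splitting directly on $g(s)\le0$ versus $g(s)>0$ for $s\ge3$, and noting that $g>0$ on $(0,3)$, already suffices.
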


\begin{proof}
Write $A(s)=s^3-7s^2-16s-12$.  If $A(s)\geq0$, then $\Psi>0$.
If $A(s)<0$, the bound $pq\leq s^2/4$ gives
\[
 \Psi(p,q)\geq2s^3(s+1)+\frac{s^2}{4}A(s)
 =\frac{s^2}{4}(s-3)(s+2)^2.
\]
Thus negativity is impossible for $s\geq3$.  When $0<s<3$, one has
$12+16s+7s^2-s^3>0$, and rearranging $\Psi<0$ yields~\eqref{eq:region}.
\end{proof}

On the diagonal, $\Psi(t,t)=4t^2(t+1)^2(2t-3)$, so the determinant
recovers the known non-FID range $0<t<3/2$.  On $p+q=2$,
condition~\eqref{eq:region} is $pq>3/4$, equivalently
$1/2<p<3/2$.  These one-dimensional portions agree with the classifications
cited in the introduction and are not new.

\section{Consequences in the parameter plane}
\label{sec:consequences}

\begin{corollary}[A new open non-FID set]\label{cor:open}
The set $\{(p,q):\Psi(p,q)<0\}$ contains a nonempty open subset outside the
previously known non-FID regions and the classified one-dimensional
subfamilies.  In particular,
\[
 \beta_{41/50,\,32/25}\notin\FID.
\]
At this point $\kappa_4>0$, so the conclusion is not a consequence of the
scalar fourth-cumulant obstruction.
\end{corollary}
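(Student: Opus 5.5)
The plan is to evaluate everything at the explicit point $(p_0,q_0)=(41/50,32/25)$ in exact rational arithmetic, and then enlarge that point to a small open box by continuity. For the point itself, Theorem~\ref{thm:first} reduces non-FID to the sign of $\Psi(p_0,q_0)$. Here $s=p_0+q_0=21/10$, $pq=656/625$, and $A(s)=s^3-7s^2-16s-12\approx-67.21$. A floating-point estimate gives
\[
 \Psi(p_0,q_0)=2s^3(s+1)+pq\,A(s)\approx57.42-70.54<0 .
\]
In the write-up I will record the exact rational value of $\Psi(p_0,q_0)$ so the sign is certified. Equivalently, I will check the inequality of Proposition~\ref{prop:geometry}: $s<3$ and $pq>2s^3(s+1)/(12+16s+7s^2-s^3)\approx0.854$.

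Next I show that the point lies outside the previously known exclusions.
\begin{itemize}
 \item Since $q_0=1.28>1$, the point is not in the square $0<p,q\le1$.
 \item For the set $I$ in~\eqref{eq:I}, I will exhibit the consecutive gaps containing each coordinate: $p_0\in[4/5,5/6]$, between the intervals $(3/4,4/5)$ and $(5/6,6/7)$, and $q_0\in[5/4,4/3]$, between $(6/5,5/4)$ and $(4/3,3/2)$.
 \item I will also note why no other interval of $I$ reaches these gaps. The left intervals lie below $1$ and the right intervals have upper endpoints decreasing to $1$.
 \item Finally $p_0\neq q_0$ and $p_0+q_0\neq2$, so the point lies on neither classified one-dimensional subfamily.
\end{itemize}

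For openness, I take the box $U=(4/5,5/6)\times(5/4,4/3)$ intersected with a small neighbourhood of $(p_0,q_0)$. This set avoids $I$ in both coordinates and avoids $\{p,q\le1\}$. Shrinking further so that $|p-p_0|,|q-q_0|<\delta$ keeps it away from the lines $p=q$ and $p+q=2$, since $|p_0-q_0|=0.46$ and $|s-2|=0.1$. Because $\Psi$ is a polynomial and $\Psi(p_0,q_0)<0$, continuity gives $\Psi<0$ on a small enough such neighbourhood. If an explicit radius is wanted, I will bound $|\partial\Psi|$ crudely on the box using $s\le 2.2$ and $pq\le1.2$. A radius of about $0.01$ should already suffice given the margin $|\Psi(p_0,q_0)|\approx13$.

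The last claim concerns $\kappa_4$. By~\eqref{eq:k4}, the sign of $\kappa_4$ equals the sign of $Q_4(p_0,q_0)$. A decimal evaluation gives roughly $0.06$, which is positive but small. This is the main delicate step, because the margin is thin and the sign must be certified exactly. I will therefore compute $Q_4(41/50,32/25)$ over the common denominator $50^4$ as an exact integer numerator and verify that it is positive. Then $\kappa_4>0$, so the fourth-cumulant obstruction $\kappa_4<0$ does not apply here, while $D_1<0$ still does.
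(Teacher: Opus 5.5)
Your proposal is correct and follows essentially the same route as the paper. Both arguments evaluate $\Psi$ and $Q_4$ exactly at $(41/50,32/25)$, place the two coordinates in the gaps $(4/5,5/6)$ and $(5/4,4/3)$ of $I$, check that the point avoids the square $0<p,q\le1$, the diagonal and the line $p+q=2$, and extend the strict inequalities to a small neighbourhood; your decimal estimates agree with the exact values $\Psi(41/50,32/25)=-8202729/625000$ and $Q_4(41/50,32/25)=3921/62500$ recorded in the paper.
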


\begin{proof}
The parameter $41/50$ is in the gap $(4/5,5/6)$ of $I$, and $32/25$ is in
the gap $(5/4,4/3)$.  The point is outside $0<p,q\leq1$, the diagonal, and
$p+q=2$.  Direct exact substitution gives
\[
 \Psi\left(\frac{41}{50},\frac{32}{25}\right)
 =-\frac{8202729}{625000}<0,
 \qquad
 Q_4\left(\frac{41}{50},\frac{32}{25}\right)
 =\frac{3921}{62500}>0.
\]
All relevant inequalities are strict, so they persist on a sufficiently
small open neighborhood.
\end{proof}

\begin{theorem}[The boundary family $p=1/2$]\label{thm:half}
For every $q>0$,
\begin{equation}\label{eq:halfdet}
 D_1(\beta_{1/2,q})=
 \frac{64q^2(2q-3)(20q^3+56q^2+43q-2)}
 {(2q+1)^6(2q+3)^3(2q+5)^2(2q+7)}.
\end{equation}
Consequently,
\begin{equation}\label{eq:halfclass}
 \beta_{1/2,q}\in\FID\quad\Longleftrightarrow\quad q\geq\frac32.
\end{equation}
By symmetry, $\beta_{p,1/2}\in\FID$ if and only if $p\geq3/2$.
\end{theorem}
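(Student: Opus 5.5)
We specialize Theorem~\ref{thm:first} to $p=1/2$. Then $s=q+\tfrac12=(2q+1)/2$, $pq=q/2$, and the denominator of \eqref{eq:D1factor} becomes
\[
s^6(s+1)^3(s+2)^2(s+3)=\frac{(2q+1)^6(2q+3)^3(2q+5)^2(2q+7)}{2^{12}}.
\]
The numerator factor is $p^2q^2=q^2/4$.

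For $\Psi(1/2,q)$, we expand $2s^3(s+1)$ and $\tfrac q2\,(s^3-7s^2-16s-12)$ with $s=q+\tfrac12$. We then verify the identity
\[
\Psi(1/2,q)=\frac{(2q-3)(20q^3+56q^2+43q-2)}{16}.
\]
Both sides are degree-4 polynomials in $q$, so it suffices to compare coefficients, or to check five values of $q$. Collecting powers of two, $2^{12}\cdot\tfrac14\cdot\tfrac1{16}=64$, which gives \eqref{eq:halfdet}.

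For the classification, suppose first that $q\ge 3/2$. Then $p=1/2\le 1/2$ and $p+q\ge 2$, so $\beta_{1/2,q}$ lies in Hasebe's FID region \eqref{eq:known-fid}.

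Now let $0<q<3/2$; we show $\beta_{1/2,q}$ is not FID.
\begin{itemize}
\item If $q\le 1$, then $0<p,q\le 1$, and the known negative result applies.
\item If $1<q<3/2$, then $2q-3<0$. Also $20q^3+56q^2+43q-2>0$ for $q>1$, indeed already for $q\ge 0.05$, since at $q=1$ it equals $117$ and it is increasing for $q>0$.
\end{itemize}
Hence $D_1<0$ on $1<q<3/2$, and Lemma~\ref{lem:hankel} excludes FID there. In fact the cubic is positive on all of $(q_0,\infty)$ for its small root $q_0\approx 0.045$. So the determinant alone handles $q_0<q<3/2$; the interval $(0,q_0]$ is covered by the $0<p,q\le1$ result anyway.

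The symmetric statement for $\beta_{p,1/2}$ follows from $\beta_{p,q}$ being the image of $\beta_{q,p}$ under $x\mapsto 1-x$. FID is preserved under affine maps $x\mapsto a+bx$, since free cumulants transform as $\kappa_n\mapsto b^n\kappa_n$ (plus a shift in $\kappa_1$), and conditional positive definiteness is unaffected by the sign $b=-1$ for the even-indexed Hankel structure. Alternatively, the Bercovici–Voiculescu characterization is invariant under reflection.

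The only real obstacle is the polynomial identity for $\Psi(1/2,q)$, which is routine exact algebra. The classification itself needs nothing beyond the cited regions and the sign analysis of the cubic on $(1,3/2)$.
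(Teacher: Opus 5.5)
Your proposal is correct and follows essentially the same route as the paper. You specialize the $D_1$ factorization at $p=1/2$; your identity $16\Psi(1/2,q)=(2q-3)(20q^3+56q^2+43q-2)$ and the power-of-two bookkeeping both check out. You then use Hasebe's regions for $q\le 1$ and $q\ge 3/2$, the sign of $D_1$ on $(1,3/2)$, and reflection for the symmetric statement, exactly as the paper does.

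Two minor points, neither affecting the argument:
\begin{itemize}
\item The small root of the cubic is about $0.044$, not $0.045$.
\item Your ``even-indexed'' remark on reflection is vague. Invariance is cleanest straight from the definition: if $\mu=\mu_n^{\boxplus n}$, the image of $\mu$ under $x\mapsto 1-x$ is the $n$-fold free convolution power of the image of $\mu_n$ under $x\mapsto 1/n-x$. Alternatively, use the congruence by $\operatorname{diag}(1,-1,1,\dots)$ on the shifted Hankel matrices.
\end{itemize}
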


\begin{proof}
Formula~\eqref{eq:halfdet} follows from~\eqref{eq:D1factor}.  For $q>1$,
the cubic factor is positive, and hence $D_1<0$ for $1<q<3/2$.
Hasebe's result covers $0<q\leq1$ negatively and $q\geq3/2$ positively,
including the endpoint~\cite{Hasebe2014}.  Reflection $x\mapsto1-x$
interchanges the parameters and preserves free infinite divisibility.
\end{proof}

Figure~\ref{fig:regions} compares the first obstruction with the known
regions.  It is only a visualization of exact inequalities and is not used
in any proof.
\begin{figure}[htbp]
 \centering
 \includegraphics[width=0.9\textwidth]{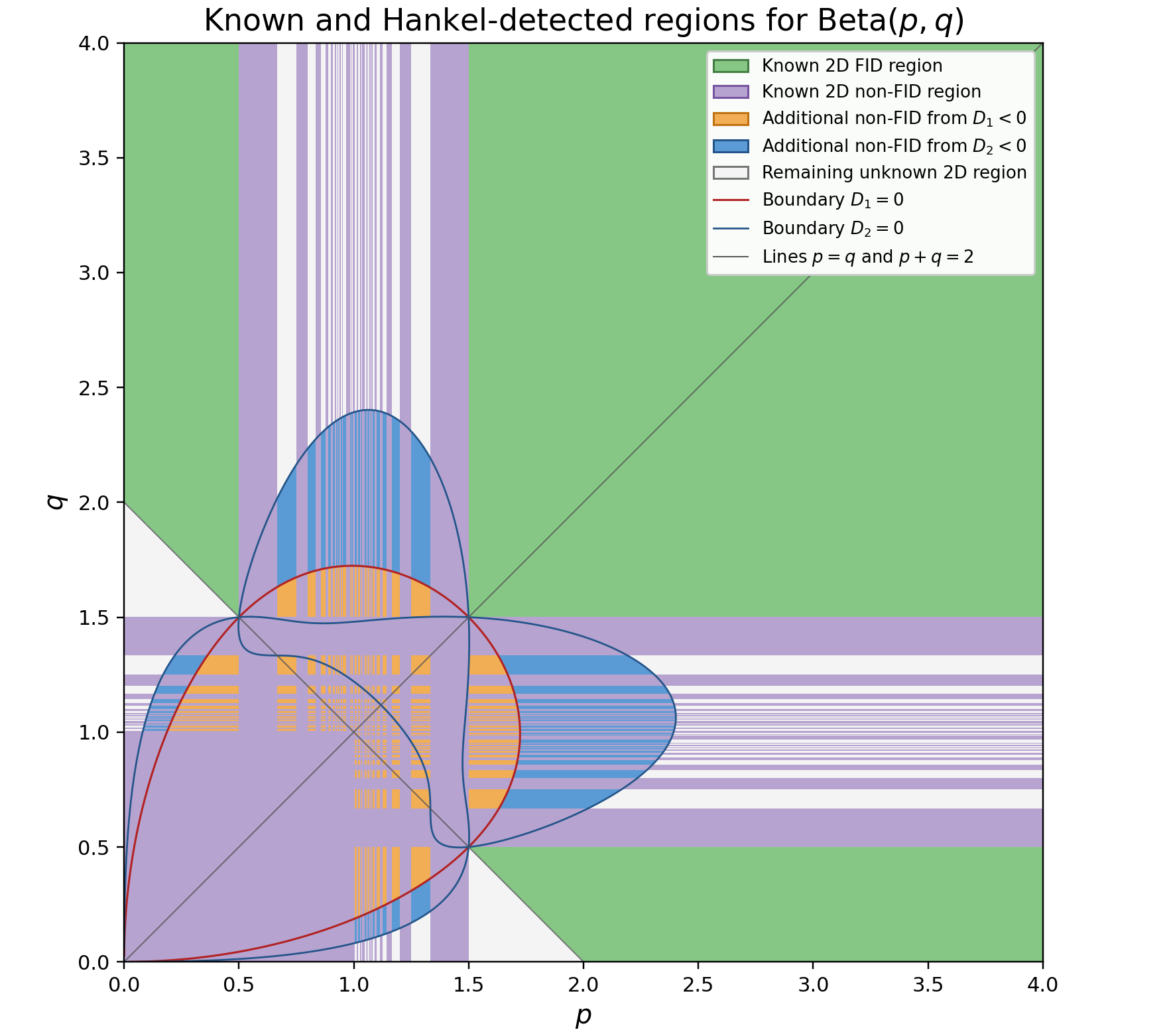}
 \caption{Known FID regions (green), previously known non-FID regions
 (purple), the additional part detected by $D_1<0$ (orange) and $D_2<0$ (blue), and the
 region left undecided by these criteria (gray), on $0<p,q\leq4$.  The
 red curve is $\Psi(p,q)=0$.}
 \label{fig:regions}
\end{figure}

\section{Strict higher-order obstructions}
\label{sec:higher}

Put
\begin{equation}\label{eq:su}
 s=p+q,\qquad u=\frac{pq}{(p+q)^2}\in(0,1/4].
\end{equation}
The next determinant already has the structured form
\begin{equation}\label{eq:D2short}
 D_2(\beta_{p,q})=
 \frac{u^3P_2(s,u)}
 {(s+1)^6(s+2)^4(s+3)^3(s+4)^2(s+5)},
\end{equation}
where $P_2$ is a cubic polynomial in $u$ with coefficients in
$\mathbb Z[s]$; its complete expression is recorded in
Appendix~\ref{app:D2}.  Thus $P_2(s,u)<0$ is an explicit $3\times3$
Hankel obstruction.

The following result shows that the hierarchy does not stabilize at low
order.  Its proof is computer-assisted but exact: every input and every
arithmetic operation belongs to $\mathbb Q$.

\begin{theorem}[Successive strict enlargements]\label{thm:witnesses}
Fix $p=7/10$.  For $2\leq N\leq11$, let $q_N$ be given in
Table~\ref{tab:witnesses}.  At $(p,q_N)$,
\begin{equation}\label{eq:signs}
 D_0,D_1,\ldots,D_{N-1}>0,\qquad D_N<0.
\end{equation}
Consequently, for every $N=2,\ldots,11$, the $(N+1)\times(N+1)$ leading
Hankel test excludes a nonempty open two-dimensional set excluded by none of
the preceding leading Hankel tests.  These open sets can be chosen outside
all non-FID regions in~\eqref{eq:I} and outside $0<p,q\leq1$.
\end{theorem}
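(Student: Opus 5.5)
The proof has two parts: an exact rational computation at each witness point, and then a continuity and genericity argument that turns those point certificates into open sets.

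\textbf{Step 1: exact cumulants.} For each $N=2,\ldots,11$ and the rational pair $(p,q_N)=(7/10,q_N)$ from Table~\ref{tab:witnesses}, the moments $m_n=(p)_n/(p+q)_n$ for $n\le 2N+2$ are explicit rationals by~\eqref{eq:moments}. I would obtain the free cumulants $\kappa_1,\ldots,\kappa_{2N+2}$ exactly in $\mathbb Q$. Rather than summing over $NC(n)$, I would use the standard recursion that inverts~\eqref{eq:mc}: the coefficient relation between the moment series $M(z)=1+\sum m_nz^n$ and the $R$-transform, $M(z)=1+\sum_{k}\kappa_k z^k M(z)^k$. This is solved term by term with integer-rational arithmetic only.

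\textbf{Step 2: the $LDL^{\mathsf T}$ certificate.} Form $H_N=[\kappa_{i+j+2}]_{i,j=0}^N$ and run the unpivoted $LDL^{\mathsf T}$ elimination in exact rational arithmetic. By~\eqref{eq:pivot}, the pivots satisfy $\ell_j=D_j/D_{j-1}$. Therefore, if $\ell_0,\ldots,\ell_{N-1}>0$ and $\ell_N<0$, then $D_0,\ldots,D_{N-1}>0$ and $D_N<0$, which is exactly~\eqref{eq:signs}. Positivity of $\ell_0,\ldots,\ell_{N-1}$ also justifies, inductively, that each elimination step is well defined. Recording the pivot signs, or the pivots themselves, gives a checkable certificate.

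\textbf{Step 3: from points to open sets.} Each $\kappa_n$ is a rational function of $(p,q)$ whose denominator is a product of factors $(s+k)$, positive on $p,q>0$. Hence every $D_j$ is continuous there. The strict signs in~\eqref{eq:signs} therefore persist on an open neighbourhood $U_N$ of $(7/10,q_N)$.
\begin{itemize}
\item On $U_N$, $D_N<0$, so $H_N$ is not positive semidefinite and Lemma~\ref{lem:hankel} excludes FID.
\item For $M<N$, $D_0,\ldots,D_M>0$ on $U_N$, so by Sylvester's criterion $H_M$ is positive definite. Thus no preceding leading test excludes any point of $U_N$.
\end{itemize}

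\textbf{Step 4: avoiding the known exclusions.} To keep $U_N$ away from the regions in~\eqref{eq:I} and from $0<p,q\le1$, I would use the following facts.
\begin{itemize}
\item $p=7/10$ lies in the complement of $I$, since $7/10\in[2/3,3/4]$ sits between the gaps $(1/2,2/3)$ and $(3/4,4/5)$. It is in fact interior to the complement, because $2/3<7/10<3/4$.
\item For the second coordinate, I would check that each $q_N$ exceeds $1$ and lies strictly inside a complementary interval of $I$, i.e.\ strictly between consecutive gaps $((2n+2)/(2n+1),(2n+1)/(2n))$ or strictly above $3/2$.
\end{itemize}
Shrinking $U_N$ to a small box then keeps it inside the open complement of the closure of $I$ in both coordinates, with $q>1$ throughout. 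I would also confirm that $U_N$ avoids the diagonal and the line $p+q=2$, which is automatic for small boxes unless the witness lies on them; the statement does not require this anyway.

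\textbf{Main obstacle.} The real difficulty is choosing the witnesses $q_N$: each must sit in the thin band where $D_{N-1}>0>D_N$, and those bands narrow as $N$ grows. I would locate them by a floating-point scan along $p=7/10$ and then pick nearby simple rationals. Only the exact pass in Step 2 counts as proof. A secondary concern is coefficient growth up to $\kappa_{24}$, which exact rationals handle without difficulty.
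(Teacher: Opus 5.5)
Your proposal is correct and follows essentially the same route as the paper. It computes exact rational cumulants via the moment-series recursion, runs an unpivoted exact $LDL^{\mathsf T}$ certificate read through $\ell_j=D_j/D_{j-1}$ and Sylvester's criterion, and then uses persistence of the strict inequalities to get the open sets, with $7/10\in(2/3,3/4)$ and all $q_N\geq 2>3/2$ lying outside $I$.
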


\begin{table}[htbp]
\centering
\begin{tabular}{c c c c}
\hline
$N$ & Hankel size & $q_N$ & final pivot $\ell_N$ (decimal display)\\
\hline
2  & $3\times3$   & $2$   & $-2.108479017\times10^{-6}$\\
3  & $4\times4$   & $5/2$ & $-2.009801918\times10^{-7}$\\
4  & $5\times5$   & $3$   & $-3.176453988\times10^{-8}$\\
5  & $6\times6$   & $4$   & $-1.143875589\times10^{-9}$\\
6  & $7\times7$   & $5$   & $-1.379257797\times10^{-10}$\\
7  & $8\times8$   & $7$   & $-6.120402914\times10^{-13}$\\
8  & $9\times9$   & $10$  & $-1.331672286\times10^{-15}$\\
9  & $10\times10$ & $12$  & $-6.974616424\times10^{-17}$\\
10 & $11\times11$ & $15$  & $-2.727627403\times10^{-18}$\\
11 & $12\times12$ & $20$  & $-3.125164564\times10^{-21}$\\
\hline
\end{tabular}
\caption{Exact-rational witnesses at $p=7/10$.  The decimal values show
scale only; their signs are certified by exact rational arithmetic.}
\label{tab:witnesses}
\end{table}

\begin{proof}
For each row, comparing coefficient of $z^n$ in
\[
    M(z)=1+\sum_{n\geq1}\kappa_n z^nM(z)^n,
\]
we compute~\eqref{eq:moments} through order $2N+2$ and use the
formal-series recursion
\begin{equation}\label{eq:recursion}
 \kappa_n=m_n-\sum_{k=1}^{n-1}\kappa_k[z^{n-k}]M(z)^k,
 \qquad M(z)=\sum_{j\geq0}m_jz^j.
\end{equation}
An unpivoted exact-rational $LDL^{\mathsf T}$ decomposition of $H_N$ then
gives $N$ positive pivots followed by the negative pivot displayed in the
table.  By~\eqref{eq:pivot}, this is equivalent to~\eqref{eq:signs}.
Sylvester's criterion shows that $H_{N-1}$ is positive definite while
$H_N$ is not positive semidefinite.

The number $7/10$ lies strictly in the gap $(2/3,3/4)$ of $I$, every
$q_N\geq2$ lies outside $I$, and none of the points belongs to the square
$0<p,q\leq1$.  All determinant and parameter inequalities are strict, so
they persist on an open neighborhood.  Appendix~\ref{app:computation}
describes the reproducible certificate.
\end{proof}

The $4\times4$ witness $(p,q)=(7/10,5/2)$ has $p+q=16/5>3$.
Therefore higher orders do not inherit the restriction in
Proposition~\ref{prop:geometry}.  Another exact witness, in a different gap,
is $(p,q)=(19/16,3)$, where $D_0,D_1,D_2>0$ and $D_3<0$.

\section{Discussion}

Theorems~\ref{thm:first} and~\ref{thm:witnesses} give finite necessary
conditions, not a complete beta classification.  In particular, the witness
table proves neither that the hierarchy is strictly decreasing at every
order nor that every non-FID beta law is eventually separated by a finite
Hankel matrix.

Two rigorous limiting facts clarify why the remaining regimes are difficult.
For fixed $p$, the rescaled beta law $qX_{p,q}$ converges in moments to the
gamma law of shape $p$.  Appendix~\ref{app:limits} shows that the same holds
for every fixed Hankel determinant, giving a conditional large-$q$
exclusion whenever a gamma determinant is negative.  At the opposite
boundary, after reflecting the beta variable and sending one parameter to
zero, every fixed-order rescaled Hankel matrix converges to a strictly
positive beta-moment matrix. Hence no fixed leading Hankel test considered here can settle this boundary regime uniformly. These facts suggest
that a full classification requires either all-order asymptotics or a direct
analytic study of the Voiculescu transform.

\appendix

\section{The explicit \texorpdfstring{$3\times3$}{3 by 3} determinant}
\label{app:D2}

In~\eqref{eq:D2short}, write
\[
 P_2(s,u)=A_0(s)+uA_1(s)+u^2A_2(s)+u^3A_3(s),
\]
where
\begin{align*}
 A_0(s)={}&24s^2(s+1)^4(s+2),\\
 A_1(s)={}&8s(s+1)^2
 (4s^5-17s^4-139s^3-292s^2-318s-216),\\
 A_2(s)={}&4s(s+1)(3s^7-49s^6-125s^5+663s^4+3812s^3\\
 &\hspace{34mm}{}+9004s^2+11472s+5760),\\
 A_3(s)={}&3s^{10}-18s^9+260s^8+1866s^7+1993s^6-16432s^5\\
 &{}-81624s^4-180416s^3-210672s^2-127872s-34560.
\end{align*}
To derive the formula, compute $\kappa_2,\ldots,\kappa_6$ from
\eqref{eq:recursion}, substitute them into $D_2$, and set
$p=as$, $q=(1-a)s$.  The numerator is invariant under $a\mapsto1-a$ and
therefore is a polynomial in $u=a(1-a)$.  Exact collection in powers of
$u$ gives the displayed expression.

\section{Two fixed-order limits}
\label{app:limits}

\begin{lemma}[Scaling and the gamma limit]\label{lem:gamma}
Let $c\mu$ denote the pushforward of $\mu$ under $x\mapsto cx$.  Then
\begin{equation}\label{eq:scaling}
 D_N(c\mu)=c^{(N+1)(N+2)}D_N(\mu).
\end{equation}
For fixed $p>0$ and $N\geq0$,
\begin{equation}\label{eq:gammalimit}
 \lim_{q\to\infty}q^{(N+1)(N+2)}D_N(\beta_{p,q})=D_N(\gamma_p),
\end{equation}
where $\gamma_p$ is the gamma distribution with shape $p$ and unit scale.
If $D_N(\gamma_p)<0$, then $\beta_{p,q}$ is not FID for all sufficiently
large $q$.
\end{lemma}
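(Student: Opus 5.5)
The plan has three steps: a homogeneity computation for \eqref{eq:scaling}, a moment-convergence argument for \eqref{eq:gammalimit}, and continuity of the Hankel test for the final assertion.

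\emph{Scaling.} First I show that free cumulants are homogeneous: $\kappa_n(c\mu)=c^n\kappa_n(\mu)$. This follows from the moment--cumulant formula \eqref{eq:mc}. Since $m_n(c\mu)=c^n m_n(\mu)$ and every $\pi\in NC(n)$ satisfies $\sum_{V\in\pi}|V|=n$, the sequence $c^n\kappa_n(\mu)$ satisfies the defining relations for $c\mu$. Uniqueness of the triangular recursion \eqref{eq:recursion} then identifies it with $\kappa_n(c\mu)$.

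Hence $H_N(c\mu)=\Lambda H_N(\mu)\Lambda$, where $\Lambda=\mathrm{diag}(c^{1},c^{2},\dots,c^{N+1})$. Indeed, the $(i,j)$ entry scales by $c^{i+j+2}=c^{i+1}c^{j+1}$. Taking determinants gives the factor $c^{2\sum_{k=1}^{N+1}k}=c^{(N+1)(N+2)}$, which is \eqref{eq:scaling}.

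\emph{Gamma limit.} By \eqref{eq:scaling}, $q^{(N+1)(N+2)}D_N(\beta_{p,q})=D_N(q\beta_{p,q})$. By \eqref{eq:moments}, the rescaled law has moments
\[
 q^n\frac{(p)_n}{(p+q)_n}=(p)_n\prod_{k=0}^{n-1}\frac{q}{p+q+k}\longrightarrow (p)_n=m_n(\gamma_p)
\]
as $q\to\infty$, for each fixed $n$. The recursion \eqref{eq:recursion} expresses $\kappa_n$ as a polynomial with integer coefficients in $m_1,\dots,m_n$, and this polynomial does not depend on the measure. So $\kappa_n(q\beta_{p,q})\to\kappa_n(\gamma_p)$ for every $n\le 2N+2$. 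The determinant is a polynomial in these finitely many cumulants, hence continuous in them, which gives \eqref{eq:gammalimit}.

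The gamma law is not compactly supported. Here $D_N(\gamma_p)$ is understood as the determinant built from its formal free cumulants, obtained from its moments by the same recursion, so no analytic input about $\gamma_p$ is needed.

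\emph{Conclusion.} Suppose $D_N(\gamma_p)<0$. By the limit, $q^{(N+1)(N+2)}D_N(\beta_{p,q})<0$ for all large $q$, so $D_N(\beta_{p,q})<0$. Then $H_N(\beta_{p,q})$ is not positive semidefinite, since a positive semidefinite matrix has nonnegative determinant. Lemma~\ref{lem:hankel} applies because $\beta_{p,q}$ is compactly supported, so $\beta_{p,q}\notin\FID$.

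The only delicate point is the interpretation of $D_N(\gamma_p)$ through formal cumulants. The rest is routine, since convergence of finitely many moments transfers through the polynomial maps to the cumulants and then to the determinant.
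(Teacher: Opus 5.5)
Your proposal is correct and follows essentially the same route as the paper: homogeneity $\kappa_n(c\mu)=c^n\kappa_n(\mu)$ together with the diagonal factorization gives the exponent $(N+1)(N+2)$; convergence of the rescaled moments to $(p)_n$ passes through the polynomial moment--cumulant maps to $D_N$; and persistence of strict negativity combined with Lemma~\ref{lem:hankel} gives the non-FID conclusion. Your explicit justification of cumulant homogeneity via \eqref{eq:mc}, and your remark that $D_N(\gamma_p)$ is defined through the moment--cumulant relations even though $\gamma_p$ is not compactly supported, are details the paper leaves implicit, but they do not change the argument.
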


\begin{proof}
The $n$th free cumulant scales by $c^n$.  Factoring $c^{i+1}$ from row $i$
and $c^{j+1}$ from column $j$ of $H_N$, with indices starting at zero,
gives the exponent $2\sum_{j=0}^{N}(j+1)=(N+1)(N+2)$.
Moreover,
\[
 m_n(qX_{p,q})=q^n\frac{(p)_n}{(p+q)_n}\longrightarrow(p)_n
 =m_n(\gamma_p).
\]
Every cumulant involved in $H_N$ is a polynomial in finitely many moments,
so the matrices and determinants converge.  Strict negativity persists for
all sufficiently large $q$ and invokes Lemma~\ref{lem:hankel}.
\end{proof}

\begin{proposition}[The small-parameter boundary]\label{prop:smallq}
Fix $p>0$ and let $Y_{p,q}=1-X_{p,q}\sim\beta_{q,p}$.  For every fixed
$N\geq0$,
\begin{equation}\label{eq:smallqlimit}
 \frac1qH_N(Y_{p,q})\longrightarrow
 \bigl[B(i+j+2,p)\bigr]_{i,j=0}^{N}\qquad(q\downarrow0).
\end{equation}
The limiting matrix is strictly positive definite.  In particular, for
fixed $p$ and $N$, $H_N(\beta_{p,q})$ is positive definite for all
sufficiently small $q>0$.
\end{proposition}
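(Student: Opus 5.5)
Plan: show that as $q\downarrow0$ every free cumulant $\kappa_n(Y_{p,q})$, $n\ge2$, equals $q\,B(n,p)+O(q^2)$. Then divide $H_N$ by $q$, pass to the limit entrywise, and identify the limit as a moment matrix of a positive measure with infinite support.

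\textbf{Step 1 (moments).} Since $Y_{p,q}\sim\beta_{q,p}$, equation~\eqref{eq:moments} gives
\[
 m_n(Y_{p,q})=\frac{(q)_n}{(p+q)_n}=q\,\frac{(q+1)_{n-1}}{(p+q)_n}.
\]
For fixed $n\ge1$ this is $q\,(n-1)!/(p)_n+O(q^2)$ as $q\downarrow0$. Moreover $(n-1)!/(p)_n=\Gamma(n)\Gamma(p)/\Gamma(n+p)=B(n,p)$. So every moment of order $\ge1$ is of order $q$ with leading coefficient $B(n,p)$.

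\textbf{Step 2 (cumulants).} In the moment--cumulant formula~\eqref{eq:mc}, or equivalently the recursion~\eqref{eq:recursion}, argue by induction on $n$ that $\kappa_n=O(q)$ for all $n\ge1$. Every term $\prod_{V\in\pi}\kappa_{|V|}$ with $\pi$ having at least two blocks is then $O(q^2)$. Hence $\kappa_n=m_n+O(q^2)=q\,B(n,p)+O(q^2)$. The induction is immediate because the recursion expresses $\kappa_n$ as $m_n$ minus products involving at least two lower-order quantities, each $O(q)$ (note $m_0=1$ appears only through $M(z)^k$, but each term still carries the factor $\kappa_k$ together with at least one further moment of positive order when $k\ge2$, or $k=1$ with $\kappa_1m_{n-1}$-type terms, which are also $O(q^2)$). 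Dividing by $q$ and letting $q\downarrow0$ gives~\eqref{eq:smallqlimit} with entries $B(i+j+2,p)$.

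\textbf{Step 3 (positivity).} Write
\[
 B(i+j+2,p)=\int_0^1x^{i}x^{j}\,x(1-x)^{p-1}\dd x .
\]
Thus the limit is the Hankel moment matrix of the measure $x(1-x)^{p-1}\dd x$ on $(0,1)$. For $c\in\R^{N+1}\setminus\{0\}$, we get $c^{\mathsf T}Bc=\int_0^1|\sum c_ix^i|^2x(1-x)^{p-1}\dd x>0$, since a nonzero polynomial vanishes only at finitely many points and the density is positive on $(0,1)$.

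\textbf{Step 4 (conclusion).} Positive definiteness is an open condition, so $q^{-1}H_N(Y_{p,q})$, and hence $H_N(Y_{p,q})$, is positive definite for small $q$. Finally, the reflection $x\mapsto1-x$ maps $\kappa_n$ to $(-1)^n\kappa_n$ for $n\ge2$. This transforms $H_N$ by the congruence $\mathrm{diag}((-1)^i)$, which preserves definiteness, so $H_N(\beta_{p,q})$ is also positive definite.

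The main obstacle is making the induction in Step 2 clean, namely verifying that every non-singleton-partition term has at least two factors of positive order. Using~\eqref{eq:mc} with all $\kappa_{|V|}=O(q)$ handles this directly, because a partition with $\ge2$ blocks contributes a product of $\ge2$ factors, each $O(q)$.
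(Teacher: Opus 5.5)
Your proposal is correct and follows essentially the same route as the paper. Both arguments use the expansion $m_n(Y_{p,q})=q\,B(n,p)+O(q^2)$ and the fact that multi-block terms in the moment--cumulant formula are $O(q^2)$; both then identify the limit as the moment matrix of $x(1-x)^{p-1}\,\mathrm{d}x$ on $(0,1)$ and transfer back to $\beta_{p,q}$ via the sign congruence from $\kappa_n(1-X)=(-1)^n\kappa_n(X)$, with your induction for $\kappa_n=O(q)$ and the openness of positive definiteness simply making explicit what the paper leaves implicit.
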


\begin{proof}
For each $n\geq1$,
\[
 \lim_{q\downarrow0}\frac{m_n(Y_{p,q})}{q}
 =\frac{(n-1)!}{(p)_n}=B(n,p).
\]
Products of two or more nonconstant moments are $O(q^2)$, so the
moment--cumulant formula gives the same first-order limit for $\kappa_n/q$.
This proves~\eqref{eq:smallqlimit}.  Its limiting entries satisfy
\[
 B(i+j+2,p)=\int_0^1x^{i+j+1}(1-x)^{p-1}\dd x,
\]
and hence form the moment matrix of a measure with positive density on
$(0,1)$.  It is strictly positive definite.  Finally, for $n\geq2$,
$\kappa_n(1-X)=(-1)^n\kappa_n(X)$.  Therefore the two shifted Hankel
matrices are related by congruence with the diagonal sign matrix
$\operatorname{diag}(1,-1,1,-1,\ldots)$, so positivity for $Y_{p,q}$ is
equivalent to positivity for $X_{p,q}$.
\end{proof}

For completeness, the determinant of the limiting matrix has the Selberg
product
\begin{equation}\label{eq:selberg}
 \det[B(i+j+2,p)]_{i,j=0}^{N}
 =\frac1{(N+1)!}\prod_{j=0}^{N}
 \frac{\Gamma(j+2)^2\Gamma(p+j)}{\Gamma(p+N+j+2)}>0.
\end{equation}
Formula~\eqref{eq:selberg} is the specialization
$(n,\alpha,\beta,\gamma)=(N+1,2,p,1)$ of the classical Selberg integral \cite{ForresterWarnaar2008} (Eq.~(1.1)). Indeed, Andr\'eief's identity identity \cite{Andreief1886} transforms the Hankel determinant associated with the beta weight $x(1-x)^{p-1}\dd x$ into the corresponding Selberg integral.
Consequently,
$D_N(\beta_{p,q})=q^{N+1}\det[B(i+j+2,p)]+O(q^{N+2})$ after reflection.

\section{Exact computation and reproducibility}
\label{app:computation}

The accompanying script \texttt{higher\_hankel\_analysis.py} implements
\eqref{eq:moments}, the recursion~\eqref{eq:recursion}, symbolic determinant
calculation in $(s,u)$, and exact-rational $LDL^{\mathsf T}$ decomposition.
The witness table is reproduced by
\begin{center}
\texttt{python3 higher\_hankel\_analysis.py --witnesses}.
\end{center}
For every row the script prints all pivot signs and the exact numerator and
denominator of the final pivot.  Thus the decimals in
Table~\ref{tab:witnesses} are not used to decide a sign.  The symbolic
option independently reconstructs the displayed low-order determinant
formulas.  No randomized or floating-point step enters the proofs of
Theorem~\ref{thm:first} or Theorem~\ref{thm:witnesses}.

\section*{Acknowledgments}
Generative-AI systems were used for exploratory symbolic computation and
language editing. All mathematical statements, exact computations, and
references were independently checked by the author.

\bibliographystyle{plain}
\bibliography{beta_hankel_draft}

\end{document}